\newcommand{\cev}[1]{\reflectbox{\ensuremath{\vec{\reflectbox{\ensuremath{#1}}}}}}
\newtheorem{theorem}{Theorem}
\newtheorem{lemma}[theorem]{Lemma}
\newtheorem{corollary}[theorem]{Corollary}
\newtheorem{question}{Question}
\newtheorem{proposition}{Proposition}
\newtheorem{conjecture}{Conjecture}
\author
{
Raphael Steiner \\
Institut f\"{u}r Mathematik \\
TU Berlin \\
\texttt{steiner@math.tu-berlin.de}
}
\date{\today}
\title{A Note on Graphs of Dichromatic Number $2$}
\begin{document}
\maketitle

\begin{abstract}
Neumann-Lara and \u{S}krekovski conjectured that every planar digraph is $2$-colourable. We show that this conjecture is equivalent to the more general statement that all oriented $K_5$-minor-free graphs are $2$-colourable.
\end{abstract}

\section{Introduction}

Digraphs and graphs considered here are loopless, and without parallel or anti-parallel arcs. A directed edge starting in $u$ and ending in $v$ is denoted by $(u,v)$, $u$ is called its \emph{tail} while $v$ is its \emph{head}. In a digraph $D$, a vertex set $X \subseteq V(D)$ is called \emph{acyclic} if the induced subdigraph $D[X]$ is acyclic. An \emph{acyclic colouring} of $D$ with $k$ colours is a mapping $c:V(D) \rightarrow [k]$ such hat $c^{-1}(\{i\})$ is acyclic for all $i \in [k]$. The \emph{dichromatic number} $\vec{\chi}(D)$ is defined as the minimal $k \ge 1$ for which such a colouring exists. For an undirected graph $G$, the dichromatic number $\vec{\chi}(G)$ is defined as the maximum dichromatic number an orientation of $G$ can have.

This notion has been introduced in 1982 by Neumann-Lara \cite{ErdNeuLara}, was rediscovered by Mohar \cite{MoharEdgeWeight}, and since then has received further attention, see \cite{perfect, fractionalNL, largesubdivisions, dig5, dig4, lists, HARUTYUNYAN2019} for some recent results. 

In analogy to the famous Four-Colour-Theorem, the following intriguing conjecture was proposed by Neumann-Lara \cite{neulara85} and independently by \u{S}krekovski \cite{bokal2004circular}.
\begin{conjecture}
If $G$ is a planar graph, then $\vec{\chi}(G) \leq 2$. 
\end{conjecture}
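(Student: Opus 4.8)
The statement is the planar dichromatic conjecture of Neumann-Lara and \v{S}krekovski, which remains open; the plan below is the attack I would first attempt, namely a discharging argument in the spirit of the Four-Colour-Theorem, together with an honest account of where it stalls. Assume for contradiction that the conjecture fails, and let $D$ be an orientation of a planar graph with $\vec{\chi}(D)\ge 3$ minimising $|V(D)|$. Fix a plane drawing of $D$. Minimality makes $D$ \emph{vertex-critical}: $\vec{\chi}(D-v)\le 2$ for every vertex $v$, and one may further assume arc-criticality, so that no arc can be deleted without dropping the dichromatic number to $2$. The goal is to force so much local structure on $D$ that it cannot be embedded in the plane, contradicting Euler's formula.

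The first step is to establish a lower bound on the minimum degree $\delta(D)$ through reducibility, i.e.\ by showing that any $2$-colouring of $D-v$ extends to a low-degree vertex $v$. The key observation is that colouring $v$ with colour $i$ creates a monochromatic directed cycle only if $v$ has \emph{both} an in-neighbour and an out-neighbour coloured $i$ that are linked by a directed $i$-coloured path in $D-v$. Since a source or a sink can never lie on a directed cycle, such a vertex can be placed in either class, so every vertex of $D$ has positive in- and out-degree; and a vertex of total degree $2$ can always be coloured (give it the colour its two neighbours avoid, or any colour if they differ), so $\delta(D)\ge 3$. Pushing the bound further is where directed \emph{Kempe chains} enter: given an offending colour at $v$, one recolours the reachability set of $v$ inside one colour class, hoping to liberate a colour, and I would try to show that for suitable low-degree configurations this recolouring terminates without introducing new monochromatic cycles.

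With a minimum-degree bound and the local constraints around low-degree vertices in hand, the plane drawing is attacked by discharging. Assign to each vertex $v$ the charge $\deg v-6$ and to each face $f$ the charge $2\deg f-6$; since $\sum_v \deg v=\sum_f \deg f=2|E(D)|$, Euler's formula gives a total charge of exactly $-12$. One then designs discharging rules that move charge from high-degree vertices and large faces toward the forced low-degree, orientation-constrained configurations, aiming to make every final charge nonnegative and thereby contradict the negative total. Identifying an unavoidable set of configurations and verifying that each is reducible would complete the proof.

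The main obstacle---and the reason this is still a conjecture---is precisely the reducibility step beyond degree $2$. In ordinary proper colouring a degree-$5$ vertex is handled by a single Kempe-chain swap, but acyclic digraph colourings have no comparably clean local recolouring theory: reversing the two colours along a directed reachability set can create monochromatic cycles far from $v$, so even degree-$3$ and degree-$4$ vertices are not evidently reducible, and there is no guarantee that a finite, purely local unavoidable-set-of-reducible-configurations certificate exists at all. This is the gap the discharging plan cannot currently bridge, and it is exactly what motivates the indirect strategy of the present paper, which sidesteps the search for reducible configurations by reducing the planar conjecture to the equivalent assertion for all oriented $K_5$-minor-free graphs.
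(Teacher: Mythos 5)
You have correctly recognised that the statement is the paper's Conjecture~1, i.e.\ an open problem: the paper itself offers no proof of it, and its actual contribution (Theorem~\ref{thm:main}) is only an equivalence between this statement and the corresponding statement for $K_5$-minor-free graphs. So there is no proof in the paper to compare yours against, and your proposal, as you candidly admit, is a programme rather than a proof. The fragments that can be checked are sound: in a vertex-minimal counterexample $D$ no vertex is a source or a sink (such a vertex lies on no directed cycle, hence accepts either colour), degree-$2$ vertices are reducible as you describe, so $\delta(D)\ge 3$; and your charge assignment $\deg v-6$ to vertices and $2\deg f-6$ to faces does total $-12$ by Euler's formula. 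The assumption of arc-criticality is also legitimate, since deleting arcs cannot raise the dichromatic number and preserves planarity and vertex count.

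The gap is the one you name yourself, and it is structural rather than technical, so the plan cannot be completed by more effort along the same lines. In proper colouring a Kempe chain is a component of the subgraph induced by two colour classes, and swapping the colours on it is an involution that preserves properness globally; locality comes for free. In acyclic colouring the natural analogue --- the reachability set of $v$ within one colour class --- has no such symmetry: recolouring it can merge previously disjoint monochromatic pieces of the other class and create a monochromatic directed cycle arbitrarily far from $v$, and no potential function is known that certifies termination of iterated repairs. Consequently no reducible configuration beyond the trivial ones (sources, sinks, degree at most $2$) is known, so there is no candidate unavoidable set for the discharging rules to aim at, and it is not even clear that a finite local certificate of this kind exists. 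This is precisely why the literature proceeds differently: Li and Mohar's result (Theorem~1 of the paper) substitutes a girth hypothesis (no directed triangles) for degree-based reducibility, and the present paper avoids local recolouring altogether, instead gluing precoloured pieces along clique cuts (Lemma~\ref{cliques}) through Wagner's decomposition (Theorem~\ref{wagner}). Your diagnosis of where the discharging attack stalls is accurate, but the proposal does not prove the statement --- and, at present, nothing does.
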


The strongest partial result obtained so far is due to Li and Mohar who showed the following:
\begin{theorem}[\cite{dig4}]
Every planar digraph without directed triangles admits an acyclic $2$-colouring.
\end{theorem}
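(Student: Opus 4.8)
The plan is to argue by contradiction via the discharging method, the standard route for colouring theorems on planar graphs. Suppose the statement fails and let $D$ be a counterexample minimising $|V(D)|$: thus $D$ is a planar digraph with no directed triangle — equivalently of directed girth at least $4$, since anti-parallel arcs are already forbidden — satisfying $\vec{\chi}(D) \ge 3$, while every smaller such digraph is $2$-colourable. Since the dichromatic number of a digraph is the maximum over its components, $D$ may be assumed connected. Fix a plane embedding. Assigning to each vertex $v$ the charge $d(v) - 4$ and to each face $f$ the charge $d(f) - 4$, Euler's formula gives total charge $-8$. The goal is to rule out certain \emph{reducible configurations}, and then to design discharging rules redistributing charge so that every vertex and every face ends non-negative, contradicting the negative total.

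First I would establish the cheap reducibility facts that control the negative charges. If a vertex $v$ has out-degree at most $1$, let $w$ be its unique out-neighbour (if any); by minimality $D-v$ admits a $2$-colouring $c$, and colouring $v$ with the colour distinct from $c(w)$ leaves $v$ with no out-arc inside its own colour class, so $v$ lies on no monochromatic directed cycle and $c$ extends — a contradiction. The symmetric argument forbids in-degree at most $1$. Hence every vertex has in-degree and out-degree at least $2$, so the minimum degree is at least $4$ and every vertex charge is non-negative. The only negative charges then sit on triangular faces (charge $-1$). Here the hypothesis enters decisively: a triangular face is a triangle of the underlying graph that is \emph{not} a directed cycle, so its three arcs are oriented transitively, with a well-defined source, middle and sink. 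This rigidity is what I would exploit in both the further reducibility lemmas and the discharging.

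The substantial work, and the step I expect to be the main obstacle, is the reducibility analysis for vertices of degree $4$ and $5$ and for clusters of triangular faces, since with minimum degree $4$ the elementary count does not yet close. The difficulty is structural: unlike proper colouring, extending an acyclic $2$-colouring to a deleted vertex $v$ is \emph{not} a local condition, because whether $v$ completes a monochromatic directed cycle depends on the existence of a monochromatic directed path between two neighbours of $v$ lying arbitrarily far apart in $D$. Consequently the reductions cannot be read off the neighbourhood alone; one must argue by re-colouring — flipping colours along alternating monochromatic directed paths to destroy the offending cycle — and use the forced transitive orientation of incident triangular faces to show that the "bad" colourings can always be repaired. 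Concretely, I would aim to prove that a degree-$4$ vertex cannot be incident to too many triangular faces, and that suitable adjacent low-degree configurations are reducible.

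Finally, with a sufficient stock of forbidden configurations, I would calibrate the discharging rules: each triangular face draws charge from its incident higher-degree vertices and from adjacent larger faces, with the amounts tuned using the source/middle/sink structure so that no degree-$4$ vertex is over-drawn. Verifying that every face reaches charge $0$ and every vertex stays non-negative, subject to the reducibility constraints, then yields the contradiction. The tension between the global, path-based nature of acyclic colouring and the purely local bookkeeping of discharging is, I expect, the genuinely delicate part of the argument.
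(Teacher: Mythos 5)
This statement is not proved in the note at all: it is quoted as a known theorem from Li and Mohar \cite{dig4}, so the only question is whether your proposal itself constitutes a proof --- and it does not. What you actually establish is only the easy frame: a vertex of out-degree (or in-degree) at most one can be deleted and the colouring extended, so a minimal counterexample has minimum degree at least $4$; by Euler's formula the total charge $\sum_v (d(v)-4)+\sum_f(d(f)-4)=-8$; and every triangular face is transitively oriented. Everything after that --- the reducibility of degree-$4$ and degree-$5$ configurations, the claim that a degree-$4$ vertex cannot be incident to too many triangular faces, and the discharging rules together with the verification that they leave every vertex and face non-negative --- is announced rather than proved: you say explicitly that this is ``the main obstacle'' and that you ``would aim to prove'' it. A discharging argument becomes a proof only when both halves (unavoidability and reducibility) are actually carried out; here neither is.

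Moreover, the gap is essential rather than routine. As you yourself observe, acyclic $2$-colourability is not a local property: whether a deleted vertex can be re-inserted depends on monochromatic directed paths between its neighbours that may run arbitrarily far away, so the standard ``delete, colour by minimality, extend'' verification of reducibility breaks down beyond the trivial out-/in-degree-one case, and you supply no recolouring or repair lemma to replace it. This is precisely why the theorem was difficult: the published proof of Li and Mohar is a long and intricate argument built around precolouring-extension statements and gluing colourings along cliques (Lemma \ref{cliques} of this note is one of its ingredients), not a local discharging scheme. Your outline stops exactly where the genuine difficulty begins, so it cannot be credited as a proof of the statement.
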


The purpose of this note is to show the following.

\begin{theorem} \label{thm:main}
The following statements are equivalent:
\begin{itemize}
\item Every planar graph $G$ has $\vec{\chi}(G) \leq 2$.
\item Every $K_5$-minor-free graph $G$ fulfils $\vec{\chi}(G) \leq 2$. Moreover, any orientation of $G$ admits an acyclic $2$-colouring without monochromatic triangles.
\end{itemize}
\end{theorem}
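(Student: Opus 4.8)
The implication from the second statement to the first is immediate, since every planar graph is $K_5$-minor-free, so I focus on deriving the second statement from the first. The plan is to expose the global structure of $K_5$-minor-free graphs via Wagner's classical decomposition theorem: every edge-maximal $K_5$-minor-free graph arises from planar graphs and copies of the Wagner graph $V_8$ by repeated clique-sums over cliques of size at most $3$. Since $\vec{\chi}$ is monotone under taking subgraphs and since a triangle of a subgraph is a triangle of the host graph carrying the same orientation, it suffices to prove the strengthened colouring property (an acyclic $2$-colouring with no monochromatic triangle, for every orientation) for these edge-maximal graphs; the property then descends to every $K_5$-minor-free graph by restricting a colouring of a maximal supergraph. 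Note that $V_8$ is triangle-free, so every clique-sum involving a $V_8$-part is taken over a clique of size at most $2$, and no monochromatic triangle can ever be created inside such a part.

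I would then argue by induction along a clique-sum decomposition tree, colouring the parts one at a time and reconciling the colourings across the shared clique $K$ with $|K|\le 3$. Two ingredients feed the induction. First, the building blocks must be colourable: for the planar parts I want to upgrade the hypothesis ``every planar graph admits an acyclic $2$-colouring'' to ``every planar graph admits an acyclic $2$-colouring with no monochromatic triangle realising a prescribed admissible colouring of a distinguished clique of size at most $3$''; for the finite graph $V_8$ the analogous statement (prescribing the colours on a single edge, with the triangle condition vacuous) is a finite verification over all orientations and edge-colourings. Second, the two partial colourings must be made to agree on $K$: using that colour classes may be globally swapped, agreement on a cut-vertex is free, and for an edge or a triangle it is enough that each part realises a sufficiently rich family of admissible colourings of $K$ so that the two families intersect (for a triangle, realising at least two of the three bipartition types on each side already forces an intersection by pigeonhole).

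The heart of the gluing is to check that the amalgamated colouring stays good. Suppose a colour class of the union contained a directed cycle $C$; choosing $C$ simple, if $C$ lay inside a single part it would contradict the inductive colouring of that part, so $C$ must cross $K$ and hence meet $K$ in at least two vertices of its, say, red colour class. Here the no-monochromatic-triangle condition does the decisive work: on a shared triangle at most two of the three vertices are red, so $C$ meets $K$ in exactly two red vertices $a,b$, and $C$ decomposes into a red path from $a$ to $b$ inside one part and a red path from $b$ to $a$ inside the other. The edge $ab$ of the clique $K$ is present in both parts, and depending on its orientation it closes one of these two red paths into a red directed cycle contained entirely in a single part, contradicting the inductive acyclicity of that part. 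Since the shared triangle is itself non-monochromatic, no new monochromatic triangle is introduced either, so the amalgamated colouring is again good.

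The step I expect to be the main obstacle is the first ingredient for the planar parts: passing from the bare existence hypothesis to a version that simultaneously forbids monochromatic triangles and realises a prescribed, or at least flexible, colouring on a distinguished small clique. The natural route is to attach planar colour-forcing gadgets along the distinguished clique and to apply the hypothesis to the enlarged planar graph; the difficulty is that acyclic $2$-colourings are constrained only through directed cycles, so it is genuinely delicate to design planar gadgets that pin down the relative colours of the clique vertices and rule out monochromatic triangles without destroying either planarity or the existence of a colouring. I would try to circumvent full prescribability by establishing instead the weaker flexibility statement that every good-colourable planar part realises at least two of the three bipartition types on any of its triangles, which is exactly what the pigeonhole matching above requires.
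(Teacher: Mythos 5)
Your high-level architecture coincides with the paper's: invoke Wagner's decomposition of $K_5$-minor-free graphs into planar graphs and copies of $V_8$ glued along cliques of size at most $3$, and assemble colourings part by part. Your gluing step is correct as stated; indeed it amounts to a self-contained proof, in the case $|K|\le 3$ with the shared triangle non-monochromatic, of the lemma the paper simply cites from Li and Mohar (two acyclic colourings agreeing on a common clique combine to an acyclic colouring of the union). However, the step you yourself flag as ``the main obstacle'' is a genuine gap, and it is exactly where the bulk of the paper's work lies. From the bare hypothesis that every planar digraph is acyclically $2$-colourable, the paper derives the strengthening your induction needs---every orientation of a planar graph admits an acyclic $2$-colouring with no monochromatic triangle that extends any prescribed pre-colouring of an edge and any prescribed non-monochromatic pre-colouring of a triangle---via a chain of equivalences proved with explicit planar gadgets: an orientation $\mathcal{O}_6$ of the octahedron, glued into every transitively oriented facial triangle of a triangulation, forces all facial triangles to be bichromatic; gluing three copies of the oriented triangulation into the inner faces of an oriented $K_4$ (combined with the edge-reversal symmetry of the problem) realises all non-monochromatic pre-colourings of a facial triangle; and an induction over separating triangles upgrades ``facial triangle'' to ``arbitrary triangle''. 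You gesture at ``colour-forcing gadgets'' but do not produce them, so your induction has no usable base case for the planar parts.

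Moreover, your proposed fallback---establishing only that each part realises at least two of the three bipartition types on any triangle, and matching parts by pigeonhole---is not just unproven, it is too weak to carry the induction through a decomposition tree with more than two parts. An intermediate glued graph $G_1\cup G_2$ must later serve as a part itself, so it must again realise at least two types on each of its triangles. But a triangle $t$ of $G_1$ is then constrained \emph{simultaneously} with the shared clique $K$: the colourings of $G_1$ realising two types on $t$ may all induce on $K$ a single type which $G_2$ happens to miss. Two-of-three on each side guarantees a common type on $K$, but not a common type compatible with prescribed behaviour elsewhere in the graph; in other words, the two-of-three property is not obviously closed under gluing, whereas full prescribability (any non-monochromatic pre-colouring of any triangle extends) is, which is precisely why the paper proves the stronger statement. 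Nor is the weaker statement visibly any easier to extract from the hypothesis than the stronger one: both appear to require the same gadget machinery. So the missing ingredient is, concretely, the paper's Proposition and its Corollary, and without them the proof does not close.
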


This strengthening is similar to the situation for undirected graph colourings, where it is known that all $K_5$-minor-free graphs are $4$-colourable \cite{Wagner1937}. 

\section{$2$-Colourings of Planar Digraphs}
In the following, we will use the term \emph{planar triangulation} when we mean a maximal planar graph on at least three vertices. It is well-known that the latter (up to the choice of the outer face and reflections) admit combinatorially unique crossing-free embeddings in the plane or on the sphere, in which every face is bounded by a triangle (from now on called facial triangles). A frequent tool in our proof will be the following Lemma, which has already been used in \cite{dig4}.
\begin{lemma}[\cite{dig4}]\label{cliques}
Let $D_1$ and $D_2$ be digraphs which intersect in a tournament. Suppose that $c_1:V(D_1) \rightarrow [k]$, $c_2:V(D_2) \rightarrow [k]$ are acyclic colourings such that $c_1|_{V(D_1) \cap V(D_2)}=c_2|_{V(D_1) \cap V(D_2)}$. Then the common extension of $c_1$ and $c_2$ to $V(D_1) \cup V(D_2)$ defines an acyclic $k$-colouring of $D_1 \cup D_2$. 
\end{lemma}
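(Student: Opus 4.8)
The plan is to argue by contradiction using a shortest monochromatic directed cycle. Write $S = V(D_1)\cap V(D_2)$, and recall that since the digraphs contain no parallel or anti-parallel arcs and $D_1\cap D_2$ is a tournament, the induced subdigraphs $D_1[S]$ and $D_2[S]$ both coincide with this tournament; in particular, between any two vertices of $S$ there is exactly one arc, and it belongs simultaneously to $D_1$ and to $D_2$. Let $c$ denote the common extension of $c_1$ and $c_2$, which is well defined precisely because the two colourings agree on $S$. Suppose for contradiction that $c$ is not acyclic, and among all directed cycles of $D_1\cup D_2$ whose vertices all receive the same colour, choose one, $C$, with the fewest vertices; say its colour is $i$.

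The first step is to record the local structure of $D_1\cup D_2$ near $S$. Any arc incident with a vertex of $V(D_1)\setminus V(D_2)$ must lie in $D_1$ (it cannot lie in $D_2$, as one endpoint is outside $V(D_2)$), and symmetrically for $V(D_2)\setminus V(D_1)$; consequently there is no arc at all between $V(D_1)\setminus V(D_2)$ and $V(D_2)\setminus V(D_1)$. From this I deduce that if $V(C)\subseteq V(D_1)$ then every arc of $C$ actually lies in $D_1$, so $C$ is a monochromatic directed cycle of $D_1$, contradicting the acyclicity of $c_1$; the same applies with $D_2$. Hence $C$ must contain both a vertex $a\in V(D_1)\setminus V(D_2)$ and a vertex $b\in V(D_2)\setminus V(D_1)$, and since no arc directly joins the two private parts, $C$ has to pass through $S$, in fact through at least two distinct vertices of $S$ (once to move from the $D_1$-private region to the $D_2$-private region, and once to return).

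Now comes the heart of the argument. Travelling around $C$, the vertices of $S$ cut it into segments, each of whose internal vertices are private to a single side — two consecutive internal vertices are joined by an arc and so cannot be private to opposite sides — so each such segment is a directed path lying entirely in $D_1$ or entirely in $D_2$. I pick a segment $P$ from $s$ to $t$ (with $s,t\in S$ distinct) having at least one internal vertex, say lying in $D_1$, and consider the unique tournament arc between $s$ and $t$. If it is the arc $(t,s)$, then $P$ together with this arc is a monochromatic directed cycle inside $D_1$, contradicting the acyclicity of $c_1$. If instead it is the arc $(s,t)$, then replacing the whole segment $P$ by this single arc, which lies in $D_1\cup D_2$, yields a monochromatic directed cycle with strictly fewer vertices, contradicting the minimality of $C$. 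Either way we reach a contradiction, so no monochromatic directed cycle exists and $c$ is the desired acyclic $k$-colouring of $D_1\cup D_2$.

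I expect the main obstacle to be exactly the second case of this analysis: when the tournament arc runs parallel to the path segment one cannot close a cycle directly, and the clean remedy is to contract the segment to its tournament shortcut and appeal to minimality, which is why I set up the proof with a shortest counterexample from the outset. A secondary point needing care is verifying that such a contraction produces a genuine (vertex-simple) directed cycle rather than merely a closed walk; this holds because the deleted vertices are internal to $P$ and private, hence distinct from all remaining vertices of $C$.
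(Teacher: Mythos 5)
Your proof is correct and complete. Note that the paper itself offers no proof of Lemma~\ref{cliques} at all --- it is imported verbatim from \cite{dig4} --- so your argument is a genuinely self-contained justification rather than a reproduction. The standard proof of such gluing lemmas runs through orderings rather than cycles: fix a colour class $X$, take topological orderings of $D_1[X\cap V(D_1)]$ and $D_2[X\cap V(D_2)]$, observe that because the intersection is a tournament both orderings restrict to the \emph{unique} transitive order on $X\cap S$ (a topological order of a complete acyclic digraph is forced), and conclude that any monochromatic directed cycle in the union would have to visit its $S$-vertices in strictly increasing tournament order, which is impossible. Your shortest-counterexample argument is the cycle-level counterpart of the same pivot --- completeness of the intersection guarantees a shortcut arc between any two $S$-vertices --- and you handle the points where a sloppier write-up would stumble: you justify that $D_1[S]=D_2[S]$ equals the tournament (using the ban on parallel and anti-parallel arcs), that every arc of $D_1\cup D_2$ with both ends in $V(D_i)$ in fact lies in $D_i$ (so a cycle confined to one side contradicts acyclicity of $c_i$), that the endpoints $s,t$ of your segment are distinct because $C$ meets $S$ at least twice, and that the shortcut produces a vertex-simple cycle. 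One micro-point you could have added for completeness: the shortcut cycle cannot degenerate into a digon, since in the case where the tournament arc is $(s,t)$ the reverse arc $(t,s)$ is absent from $D_1\cup D_2$; but even a digon would still be a shorter monochromatic directed cycle, so minimality applies regardless and nothing breaks. The ordering proof is arguably slicker and generalizes immediately to statements about merging topological orders; yours is more elementary and needs no machinery beyond the extremal choice.
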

In this section we prepare the proof of Theorem \ref{thm:main} with some strengthend but equivalent formulations of Neumann-Lara's Conjecture.
\begin{proposition}
The following statements are equivalent:
\begin{enumerate}[\label=(i)]
\item Neumann-Lara's Conjecture, i.e., every planar digraph has an acyclic $2$-colouring.
\item Every oriented planar triangulation admits an acyclic $2$-colouring without monochromatic facial triangles.
\item For any planar triangulation $T$, any facial triangle $a_1a_2a_3$ in $T$, and any non-monochromatic pre-colouring $p:\{a_1,a_2,a_3\} \rightarrow \{1,2\}$, every orientation $\vec{T}$ of $T$ admits an acyclic $2$-colouring $c:V(\vec{T}) \rightarrow \{1,2\}$ without monochromatic facial triangles such that $c(a_i)=p(a_i),$ $i \in \{1,2,3\}$.
\item For any planar triangulation $T$, any triangle $a_1a_2a_3$ in $T$, and any non-monochromatic pre-colouring $p:\{a_1,a_2,a_3\} \rightarrow \{1,2\}$, every orientation $\vec{T}$ of $T$ admits an acyclic $2$-colouring $c:V(\vec{T}) \rightarrow \{1,2\}$ without monochromatic triangles such that $c(a_i)=p(a_i), i \in \{1,2,3\}$.
\end{enumerate}
\begin{figure}[h]
\centering
\includegraphics[scale=1]{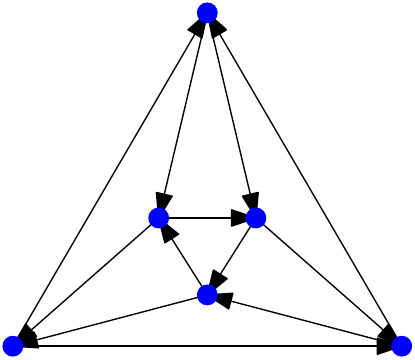} 
\caption{The octahedron-orientation $\mathcal{O}_6$.} \label{octahedron}
\end{figure}
\begin{figure}[h]
\centering
\includegraphics[scale=0.8]{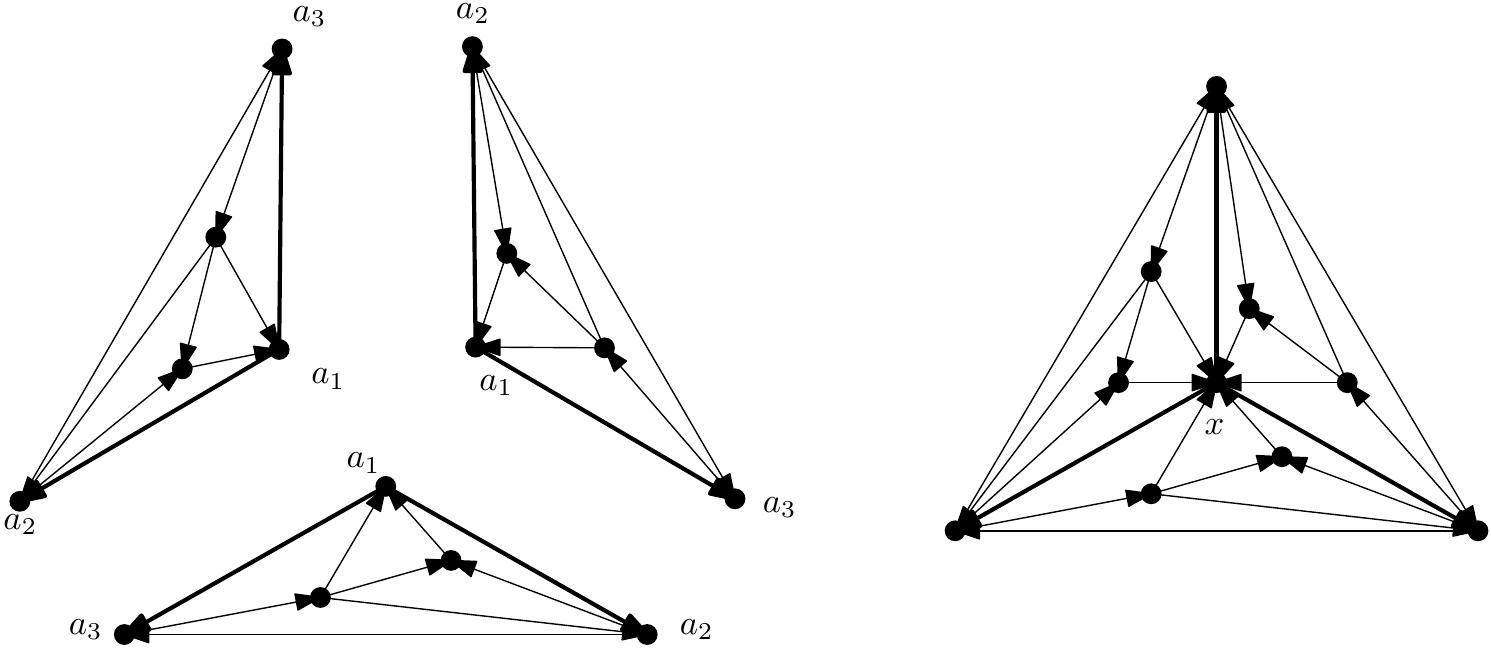}
\vspace{5pt}
\includegraphics[scale=0.8]{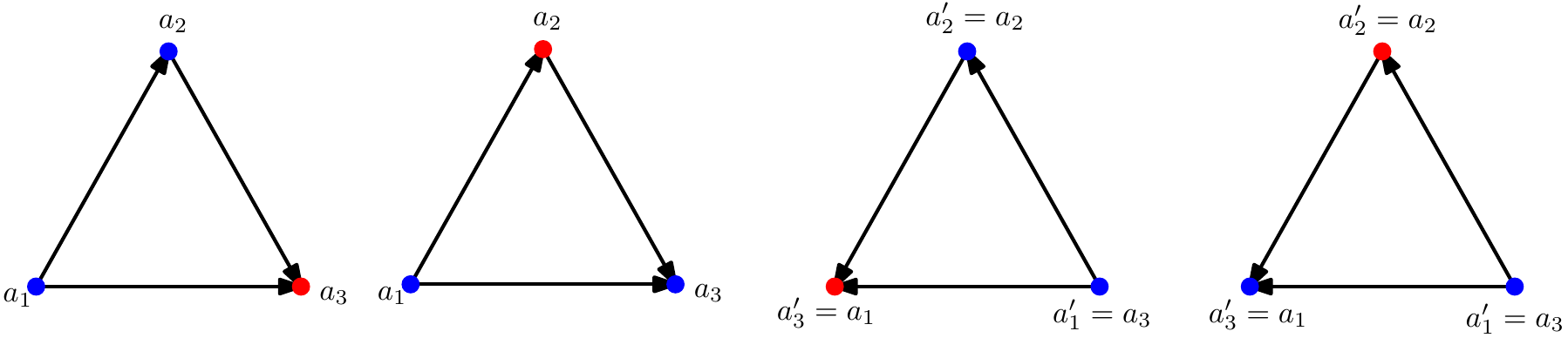}
\caption{Top: Illustration of the construction of $\vec{T}^\ast$ from three copies of $\vec{T}$. Bottom: The restriction of the colourings $c_1,c_2,c_1',c_2'$ to the triangle $t$. Blue vertices have colour $1$, red corresponds to colour $2$.} \label{construction}
\end{figure}
\begin{proof} ~
{$(i) \Longrightarrow (ii)$:} Suppose that every planar digraph is $2$-colourable and let $\vec{T}$ be an arbitrary orientation of a planar triangulation $T$. Looking at the orientation $\mathcal{O}_6$ of the octahedron graph depicted in Figure \ref{octahedron}, it is easily observed that in any acyclic $2$-colouring the triangle bounding the outer face cannot be monochromatic. Now consider a crossing-free spherical embedding of $\vec{T}$. For every facial triangle in this embedding whose orientation is transitive, we take a copy of $\mathcal{O}_6$ and glue this copy into the face in such a way that the outer three edges of $\mathcal{O}_6$ are identified with the three edges of the facial triangle (to make the orientations of the identified edges compatible, it might be necessary to reflect and rotate the embedding of $\mathcal{O}_6$ shown in Figure \ref{octahedron}). This creates a crossing-free embedding of a planar oriented triangulation $\vec{T}^\Delta$. By assumption, $\vec{T}^\Delta$ admits an acyclic $2$-colouring. This colouring restricted to the vertices of the subdigraph $\vec{T}$ clearly is still valid. Furthermore, no triangle in $\vec{T}$ can be monochromatic: This follows by definition if the triangle forms a directed cycle. If the orientation is transitive, by definition of $\vec{T}^\Delta$, a monochromatic colouring would contradict the fact that $\mathcal{O}_6$ has no acyclic $2$-colouring with the outer three vertices being coloured the same. 
\paragraph{$(ii) \Longrightarrow (iii)$:} Suppose that $(ii)$ holds. Let $\vec{T}$ be an orientation of a planar triangulation $T$ and let $a_1a_2a_3$ be the vertices of a facial triangle $t$ of $T$, equipped with a non-monochromatic pre-colouring $p:\{a_1,a_2,a_3\} \rightarrow \{1,2\}$. 

\paragraph{Case 1: $t$ is not directed in $\vec{T}$.}

By relabelling, we may assume the transitive orientation $(a_1,a_2), (a_1,a_3), (a_2,a_3) \in E(\vec{T})$. Now consider a plane embedding of $\vec{T}$ in which $a_1a_2a_3$ forms the bounding triangle of the outer face and where $a_1,a_2,a_3$ appear in clockwise order. We now define a new oriented planar triangulation $\vec{T}^\ast$ as follows: We consider the embedding and orientation of the $K_4$ as shown in Figure \ref{construction}, in which the outer face has a clockwise direction and the central vertex is a source. Into each of the three inner faces of this embedding, we can now glue a copy of $\vec{T}$ with the described embedding in such a way that all orientations of identified edges agree. The vertex $a_1$ from each copy now is identified with the central vertex, which we call $x$. This oriented planar triangulation $\vec{T}^\ast$ according to assumption admits an acyclic $2$-colouring $c^\ast:V(\vec{T}^\ast) \rightarrow \{1,2\}$ without monochromatic facial triangles. By relabelling the colours, we may assume that $c^\ast(x)=1$. Because the outer triangle is not monochromatic, there have to be edges $e_1=(u_1,v_1)$ and $e_2=(u_2,v_2)$ on the outer triangle such that $c^\ast(u_1)=c^\ast(v_2)=1, c^\ast(u_2)=c^\ast(v_1)=2$. To both $e_1, e_2$ we have a corresponding copy of $\vec{T}$, and the $2$-colourings $c_1,c_2$ of $\vec{T}$ which $c^\ast$ induces on these copies are still valid acyclic colourings. Furthermore, there are no monochromatic facial triangles in $\vec{T}$ with respect to $c_1,c_2$: Every bounded facial triangle of $\vec{T}$ in the corresponding copy also forms a bounded facial triangle of $\vec{T}^\ast$, while the outer triangles of the copies contain $e_1$ respectively $e_2$ and are thus not monochromatic under $c_1$ respectively $c_2$. From the way we glued the three copies of $\vec{T}$ we conclude that $c_1(a_1)=c^\ast(x)=1, c_1(a_2)=c^\ast(u_1)=1, c_1(a_3)=c^\ast(v_1)=2$ and $c_2(a_1)=c^\ast(x)=1, c_2(a_2)=c^\ast(u_2)=2, c_2(a_3)=c^\ast(v_2)=1$. 

Now consider the oriented planar triangulation $\cev{T}$ which is obtained from $\vec{T}$ by reversing the orientations of all edges. Let us rename the vertices of $t$ according to $a_1':=a_3, a_3':=a_1, a_2':=a_2$. We have $(a_1',a_2'),(a_1',a_3'),(a_2',a_3') \in E(\cev{T})$. We can therefore apply the same arguments as above to $\cev{T}$ with the transitive labeling $a_1'a_2'a_3'$ of the triangle $t$. Hence, we obtain acyclic $2$-colourings $c_1',c_2':V(\cev{T}) \rightarrow \{1,2\}$ of $\cev{T}$ without monochromatic facial triangles such that $c_1'(a_1')=c_1'(a_2')=1, c_1'(a_3')=2$ and $c_2'(a_1')=1, c_2'(a_2')=2, c_2'(a_3')=1$. 

Finally, because the acyclic $2$-colourings of $\vec{T}$ and $\cev{T}$ coincide, we conclude that $c_1,c_2,c_1',c_2':V(\vec{T}) \rightarrow \{1,2\}$ all form acyclic $2$-colourings of $\vec{T}$ without monochromatic facial triangles. It is easy to see that the colourings $c_1,c_2,c_1'$ together extend all possible non-monochromatic pre-colourings $p$ of the triangle $t$ with exactly one vertex of colour $2$ (see the illustration in Figure \ref{construction}). Hence, after flipping all colours from $1$ to $2$ and $2$ to $1$ we have found extending colourings for all possible pre-colourings $p$ without monochromatic facial triangles, as desired.

\paragraph{Case 2: $t$ is directed in $\vec{T}$.}

After relabelling (and possibly exchanging the colours $1$ and $2$), we may suppose that $(a_1,a_2),(a_2,a_3),(a_3,a_1) \in E(\vec{T})$ and $p(a_1)=1,p(a_2)=1,p(a_3)=2$. Consider the orientation $\vec{T}_e$ obtained from $\vec{T}$ by reversing the edge $e=(a_3,a_1)$. By the first case, we know that $\vec{T}_e$ admits an acyclic $2$-colouring $c_e$ without monochromatic facial triangles which extends $p$. Because the endpoints of $e$ receive different colours, it follows directly that $c_e$ also defines an acyclic $2$-colouring of $\vec{T}$ with the required properties, and the claim follows also in this case.
\paragraph{$(iii) \Longrightarrow (iv)$:} We prove the statement by induction on the number of vertices. In the base case, where $\vec{T}$ is an oriented triangle, the statement clearly holds true. Now let $n \ge 4$, and assume that the statement holds for all triangulations with less than $n$ vertices. Let $\vec{T}$ be an arbitrary orientation of some planar triangulation $T$ with $n$ vertices. If $T$ is $4$-connected, then the only triangles in $T$ are the facial triangles and therefore the claim follows from $(iii)$. Therefore we may suppose that $T$ is not $4$-connected, i.e., there exists a \emph{separating triangle} $x_1x_2x_3$ in $T$. Consider some plane crossing-free embedding of $T$. Here, $x_1x_2x_3$ separates the vertices in its interior ($V_\text{in}  \subseteq V(T)$) from those in its exterior ($V_\text{out} \subseteq V(T)$). Let $\vec{T}_\text{out}:=\vec{T}[V_\text{out} \cup \{x_1,x_2,x_3\}]$ and $\vec{T}_\text{in}:=\vec{T}[V_\text{in} \cup \{x_1,x_2,x_3\}]$. Both form oriented planar triangulations on less than $n$ vertices. To prove that $\vec{T}$ satisfies the inductive claim, let $t=a_1a_2a_3$  be a given triangle in $T$ equipped with a non-monochromatic pre-colouring $p:\{a_1,a_2,a_3\} \rightarrow \{1,2\}$. We must either have $\{a_1,a_2,a_3\} \subseteq V_\text{out} \cup \{x_1,x_2,x_3\}$ or $\{a_1,a_2,a_3\} \subseteq V_\text{in} \cup \{x_1,x_2,x_3\}$. Assume that we are in the first case, the second case is completely analogous. Then, by the induction hypothesis, there exists an acyclic colouring $c_\text{out}:V(\vec{T}_\text{out}) \rightarrow \{1,2\}$ without monochromatic triangles such that $c(a_i)=p(a_i), i \in \{1,2,3\}$. The restriction of $c_\text{out}$ to $x_1x_2x_3$ now defines a non-monochromatic pre-colouring for $\vec{T}_\text{in}$, and it follows from the induction hypothesis that there exists an acyclic $2$-colouring $c_2$ of $\vec{T}_\text{in}$ without monochromatic triangles which agrees with $c_\text{out}$ on $V(\vec{T}_\text{out}) \cap V(\vec{T}_\text{in})=\{x_1,x_2,x_3\}$. By Lemma \ref{cliques}, the common extension of $c_\text{out}, c_\text{in}$ to $V(\vec{T})$ now defines an acyclic $2$-colouring of $\vec{T}$, extending the given pre-colouring $p$ of $t$ and without monochromatic triangles. This verifies the inductive claim.
\paragraph{$(iv) \Rightarrow (i)$:} This follows since every planar graph is a subgraph of a planar triangulation. 
\end{proof}
\end{proposition}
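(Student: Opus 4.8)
The plan is to prove all four statements equivalent through the cycle of implications $(i)\Rightarrow(ii)\Rightarrow(iii)\Rightarrow(iv)\Rightarrow(i)$. Statements $(ii)$–$(iv)$ are progressively stronger reformulations of $(i)$ specialised to oriented triangulations, so the only genuinely non-trivial content is upgrading a bare existence assertion into the existence of colourings with prescribed boundary data. I expect the step $(ii)\Rightarrow(iii)$ to be the main obstacle, since it is there that one must turn ``some good colouring exists'' into ``a good colouring realising any prescribed non-monochromatic pattern on a facial triangle exists.''

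For $(i)\Rightarrow(ii)$, the point to overcome is that an acyclic $2$-colouring can render a transitively oriented facial triangle monochromatic without violating acyclicity. I would fix this with a gadget: an orientation $\mathcal{O}_6$ of the octahedron in which, by a small finite check, no acyclic $2$-colouring leaves the bounding triangle monochromatic. Taking a spherical embedding of $\vec{T}$, I glue a suitably rotated and reflected copy of $\mathcal{O}_6$ into each transitively oriented facial triangle so that identified edges keep their orientation, obtaining a planar oriented triangulation $\vec{T}^\Delta$. Applying $(i)$ to $\vec{T}^\Delta$ and restricting to $\vec{T}$ yields an acyclic colouring in which directed facial triangles are automatically non-monochromatic and transitive ones are forbidden by the gadget.

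The core of the argument is $(ii)\Rightarrow(iii)$, which I would approach by a symmetric amalgamation. First treat a transitively oriented facial triangle $t=a_1a_2a_3$: embed $\vec{T}$ with $t$ as outer face and glue three copies of $\vec{T}$ into the three inner faces of an oriented $K_4$ whose outer triangle is directed clockwise and whose centre $x$ is a source, identifying the vertex $a_1$ of each copy with $x$. This forms a triangulation $\vec{T}^\ast$; I apply $(ii)$ and normalise so that $c^\ast(x)=1$. Since the outer triangle of $\vec{T}^\ast$ is not monochromatic, two of its directed edges carry opposite bichromatic patterns, and the two copies of $\vec{T}$ glued onto them inherit acyclic colourings $c_1,c_2$ realising two distinct non-monochromatic patterns on $t$; no copy has a monochromatic facial triangle, as its inner faces are faces of $\vec{T}^\ast$ and its outer triangle $t$ contains a bichromatic edge. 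Repeating the construction for the reversed orientation $\cev{T}$, whose colour classes are acyclic exactly when those of $\vec{T}$ are, produces a further colouring $c_1'$; together $c_1,c_2,c_1'$ realise all three patterns with a single vertex of colour $2$, and swapping colours yields the remaining three. The directed case then reduces to the transitive one: reverse one edge of $t$, obtain an extending colouring from the transitive case, and observe that its two endpoints receive different colours, so reversing the edge back changes no monochromatic subdigraph and the colouring remains acyclic for $\vec{T}$.

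Finally, $(iii)\Rightarrow(iv)$ follows by induction on $|V(T)|$. If $T$ is $4$-connected its only triangles are facial and $(iii)$ applies directly; otherwise a separating triangle $x_1x_2x_3$ splits $T$ into an outer and an inner triangulation, each on fewer vertices and meeting exactly in the tournament on $\{x_1,x_2,x_3\}$. The prescribed triangle lies on one side; I colour that side by induction to realise $p$, note that the induced colouring on $x_1x_2x_3$ is non-monochromatic because the colouring has no monochromatic triangle, extend it by induction to the other side agreeing on $\{x_1,x_2,x_3\}$, and glue the two via Lemma \ref{cliques}. The closing implication $(iv)\Rightarrow(i)$ is immediate, since every planar digraph embeds as a subdigraph of an oriented planar triangulation and the restriction of a colouring supplied by $(iv)$ is acyclic.
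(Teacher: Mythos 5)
Your proposal is correct and follows essentially the same route as the paper: the $\mathcal{O}_6$ octahedron gadget for $(i)\Rightarrow(ii)$, the three-copy $K_4$ amalgamation together with the all-edges-reversed triangulation $\cev{T}$ for $(ii)\Rightarrow(iii)$, induction over a separating triangle glued via Lemma \ref{cliques} for $(iii)\Rightarrow(iv)$, and completion to a triangulation for $(iv)\Rightarrow(i)$. The only detail to make explicit is that in the directed-triangle case the edge you reverse must be chosen with differently pre-coloured endpoints (always possible since $p$ is non-monochromatic), which is exactly what the paper arranges by relabelling.
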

Because any edge in a planar triangulation lies on a triangle, we directly obtain the following.
\begin{corollary}\label{obvious}
Under the assumption of Neumann-Lara's Conjecture, every orientation of a planar graph admits an acyclic $2$-colouring without monochromatic triangles which can be chosen to extend any given pre-colouring of an edge or any non-monochromatic pre-colouring of a triangle. 
\end{corollary}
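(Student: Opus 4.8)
The plan is to derive the Corollary directly from the preceding Proposition, using that under Neumann-Lara's Conjecture statement $(iv)$ holds for every planar triangulation. The strategy is to embed the given planar graph into a triangulation, reduce an edge pre-colouring to a non-monochromatic triangle pre-colouring, apply $(iv)$, and then restrict back.

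Let $\vec{G}$ be an orientation of a planar graph $G$; the cases $|V(G)| \le 2$ are trivial, so assume $|V(G)| \ge 3$. First I would fix a crossing-free embedding of $G$ and add edges until obtaining a planar triangulation $T \supseteq G$, orienting the newly added edges arbitrarily so as to get an orientation $\vec{T}$ extending $\vec{G}$. Then $\vec{G}$ is a subdigraph of $\vec{T}$, and every triangle of $G$ is also a triangle of $T$.

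If the prescribed data is a non-monochromatic pre-colouring $p$ of a triangle $a_1 a_2 a_3$ of $G$, then this is also a non-monochromatic pre-colouring of a triangle of $T$, so statement $(iv)$ yields an acyclic $2$-colouring $c$ of $\vec{T}$ without monochromatic triangles extending $p$. If instead the prescribed data is a pre-colouring of an edge $uv$ of $G$, I would invoke the observation stated just before the Corollary: in the triangulation $T$ the edge $uv$ lies on a triangle $uvw$. I then extend the edge pre-colouring to $w$ so that $uvw$ becomes non-monochromatic — any colour of $w$ works if $u$ and $v$ already differ, and otherwise I assign $w$ the colour distinct from that of $u$ and $v$. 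Applying $(iv)$ to this non-monochromatic triangle pre-colouring again produces a colouring $c$ of $\vec{T}$ with the desired properties.

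Finally, in each case I would restrict $c$ to $V(G)$. This preserves acyclicity of the colour classes, since a subdigraph of an acyclic digraph is acyclic, and it introduces no monochromatic triangle, since every triangle of $G$ is a triangle of $T$; moreover the restriction still realises the prescribed pre-colouring. I expect no serious obstacle here: the entire argument is bookkeeping on top of statement $(iv)$, and the only step requiring a moment's care is the edge case, where the colour of the auxiliary third vertex $w$ must be chosen so that the triangle pre-colouring is non-monochromatic and $(iv)$ becomes applicable.
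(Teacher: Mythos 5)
Your proposal is correct and takes essentially the same route as the paper: the paper derives the corollary from statement $(iv)$ of the Proposition via exactly the observation you use, namely that every edge of a planar triangulation lies on a triangle, so an edge pre-colouring can be extended to a non-monochromatic triangle pre-colouring. Your write-up merely makes explicit the completion-to-a-triangulation, the choice of colour for the auxiliary vertex, and the restriction back to $V(G)$, all of which the paper leaves implicit in its one-line justification.
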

\section{$K_5$-Minor-Free Graphs} 
Given a pair $G_1, G_2$ of undirected graphs such that $V(G_1) \cap V(G_2)$ forms a clique of size $i$ in both $G_1$ and $G_2$, and such that $|V(G_1)|,|V(G_2)|>i$, the graph $G$ with $V(G)=V(G_1) \cup V(G_2)$ and $E(G)=E(G_1) \cup E(G_2)$ is called the \emph{proper $i$-sum} of $G_1$ and $G_2$. A graph obtained from $G$ by deleting some (possibly all or none) of the edges in $E(G_1) \cap E(G_2)$ is said to be an \emph{$i$-sum} of $G_1$ and $G_2$. The central tool in proving Theorem \ref{thm:main} is the following classical result due to Wagner. By $V_8$ we denote the so-called \emph{Wagner graph}, that is the graph obtained from $C_8$ by joining any two diagonally opposite vertices by an edge. 
\begin{theorem}[\cite{Wagner1937}]\label{wagner}
A simple graph is $K_5$-minor-free if and only if it can be obtained from planar graphs and copies of $V_8$ by means of repeated $i$-sums with $i \in \{0,1,2,3\}$. 
\end{theorem}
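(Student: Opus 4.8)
The plan is to prove the two directions separately, with the forward (sufficiency) implication being short and the structural (necessity) implication carrying all the real content.

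\textbf{Sufficiency.} First I would record that both building blocks exclude $K_5$ as a minor: planar graphs do so by the planar case of the Kuratowski--Wagner theorem, and $V_8$ does so by a direct check (on eight vertices and twelve edges, a $K_5$ minor would require five pairwise-adjacent connected branch sets, which a short case analysis, or a count against the extremal bound, rules out). It then remains to show that $i$-sums with $i \le 3$ preserve $K_5$-minor-freeness. The mechanism is that $K_5$ is $4$-connected: in a proper $i$-sum $G$ of $G_1, G_2$ along a shared clique $S$ with $|S| = i \le 3$, a hypothetical $K_5$ minor with branch sets $B_1, \dots, B_5$ cannot be separated cleanly by the at most three vertices of $S$, so using that $S$ is a clique one reroutes or amalgamates the branch sets meeting both sides into a single side, obtaining a $K_5$ minor in $G_1$ or in $G_2$ and contradicting the hypothesis. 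Since deleting edges only destroys minors, the same holds for arbitrary (not necessarily proper) $i$-sums, and applying this along the construction tree gives one direction.

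\textbf{Necessity.} For the converse I would first prove that every \emph{edge-maximal} $K_5$-minor-free graph $G$ with $|V(G)| \ge 4$ admits the required decomposition, by induction on $|V(G)|$. The decisive feature of edge-maximality is that every separating set of size at most $3$ is a clique: if a separator $\{u,v\}$ (respectively $\{x,y,z\}$) contained a non-edge, then adding it would, by maximality, create a $K_5$ minor, which one lifts back to $G$ by realising the new edge along a path through the opposite side (available by connectivity), contradicting $K_5$-minor-freeness. Consequently, whenever $\kappa(G) \le 3$ the graph $G$ is a proper $i$-sum along a clique of size $i \le 3$ of two strictly smaller edge-maximal $K_5$-minor-free graphs, each being one side together with the already-present separating clique, and the induction hypothesis applies to both pieces. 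This reduces the problem to the case in which $G$ is \emph{internally $4$-connected}, i.e. $3$-connected with every $3$-separation trivial.

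\textbf{The main obstacle.} The crux is then the classification of the internally $4$-connected case: such a $G$ is either a plane triangulation or a copy of $V_8$. Assuming $G$ non-planar, and noting that a $K_5$ subdivision would be a $K_5$ minor, Kuratowski's theorem forces $G$ to contain a subdivision $H$ of $K_{3,3}$, and the heart of the argument is a careful analysis of how the bridges of $H$ attach to it. Internal $4$-connectivity makes the attachments rich enough that all but a few configurations produce a $K_5$ minor; ruling those out and tracing through the surviving attachment pattern forces the cyclic ``$C_8$ plus the four main diagonals'' structure, identifying $G$ as $V_8$. I expect this topological bridge analysis to be the genuinely delicate part of the whole proof; the connectivity reductions around it are essentially bookkeeping.

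\textbf{From maximal to general graphs.} Finally I would remove the edge-maximality assumption. Given an arbitrary simple $K_5$-minor-free graph, I extend it to an edge-maximal one $G^{+}$ on the same vertex set, decompose $G^{+}$ as above, and recover $G$ by deleting the missing edges. An edge deleted along a separating clique is exactly a clique edge permitted to be removed in the definition of an $i$-sum; an edge deleted inside a plane-triangulation piece leaves it planar; and an edge deleted inside a $V_8$ piece leaves a graph that is again an $i$-sum of planar pieces (for instance, deleting a diagonal of $V_8$ yields, after suppressing the two resulting degree-$2$ vertices, a copy of $K_{3,3}$, itself the $3$-sum of $K_4$ and $K_5-e$ along a triangle all of whose edges are then deleted, while deleting a rim edge yields a planar graph). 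Thus the deletions stay within the class generated by planar graphs and $V_8$ under $i$-sums; checking this closure under taking subgraphs is routine if finicky, and it completes the necessity direction and hence the theorem.
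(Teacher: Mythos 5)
The paper does not prove this statement at all --- it is quoted from Wagner's 1937 paper as a known classical result --- so your proposal can only be measured against the standard literature proof (Wagner; see also Diestel's textbook treatment). Your sufficiency direction and your final closure-under-edge-deletion step are fine (in particular your observation that $K_{3,3}$ is a $3$-sum of $K_4$ and $K_5-e$ with the shared triangle deleted is correct, and it adapts to the subdivided copy $V_8$ minus a diagonal without suppressing vertices). But the necessity direction contains a genuine error: the lemma that in an edge-maximal $K_5$-minor-free graph \emph{every} separating set of size at most $3$ is a clique is false, and $V_8$ itself is the counterexample. The Wagner graph is edge-maximal $K_5$-minor-free (adding, say, the chord $13$ to $C_8$ with diagonals $15,26,37,48$ creates a $K_5$ minor with branch sets $\{1\},\{3\},\{2,6\},\{4,5\},\{7,8\}$), it is triangle-free, and being $3$- but not $4$-connected it has $3$-separators, namely each neighbourhood $N(v)$, none of which is a clique. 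The rerouting argument you invoke is sound for separators of size at most $2$, but for a $3$-separator $S$ the $K_5$ minor created by the added edge may meet every component of $G-S$, leaving no component through which to reroute --- exactly what happens in $V_8$ plus a chord. This is not a repairable detail of ``bookkeeping'': it is precisely the mathematical reason the exceptional graph $V_8$ occurs in the theorem at all. Indeed your outline is internally inconsistent here: if your lemma were true, a trivial $3$-separation over a clique would still split off a $K_4$ and a strictly smaller piece, so your induction would descend all the way to $4$-connected graphs, the base case would yield only plane triangulations, and $V_8$ could never emerge --- contradicting the statement you are proving.

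The correct handling, which your proof must supply, is a dichotomy at $3$-separations rather than a clique lemma. For a \emph{non-trivial} $3$-separation $(A,B)$ with $S=A\cap B$, one shows that each side together with a triangle on $S$ is a \emph{minor} of $G$ (the opposite side is connected and attaches to all of $S$, and with at least two vertices it can be split to realise all three triangle edges), hence both pieces are $K_5$-minor-free and strictly smaller; $G$ is then recovered as their $3$-sum with the artificial triangle edges deleted, which the $i$-sum definition in this paper expressly permits --- note this route lets you drop edge-maximality altogether. Trivial $3$-separations, by contrast, genuinely cannot be reduced: for $v\in V(V_8)$, the graph $V_8-v$ plus a triangle on $N(v)$ already has a $K_5$ minor (e.g.\ for $v=1$, contract $34$ and $67$), which is why the induction must stop at internal $4$-connectivity, where your (correctly identified, and correctly described as the delicate core) classification ``internally $4$-connected with no $K_5$ minor implies planar or $V_8$'' via Kuratowski and bridge analysis takes over. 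So the skeleton of your proposal matches the standard proof, but the step you dismissed as routine is where the argument as written fails and where the real structure of the theorem lives.
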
 
$V_8$ is triangle-free and admits an acyclic $2$-colouring for any orientation. Moreover, it can be easily checked that such a colouring can be chosen to extend any given pre-colouring of two adjacent vertices. We are now in the position to prove Theorem \ref{thm:main}.

\begin{proof}[Proof of Theorem \ref{thm:main}.]
Assume that Neumann-Lara's Conjecture holds true. We have to prove that every oriented $K_5$-minor-free graph admits an acyclic $2$-colouring without monochromatic triangles. In fact, we prove the following slightly stronger statement:
\\
\emph{Every orientation of a $K_5$-minor-free graph admits an acyclic $2$-colouring without monochromatic triangles which can be chosen to extend any given pre-colouring of an edge or any non-monochromatic pre-colouring of a triangle. }
\\
Assume towards a contradiction that there exists a $K_5$-minor-free graph $G$ which does not satisfy this claim and choose $G$ minimal with respect to the number of vertices, and among all such graphs maximal with respect to the number of edges.

By Corollary~\ref{obvious}, we know that the claim is fulfilled by all planar graphs and by $V_8$, and so it follows from Theorem~\ref{wagner} that $G$ is the $i$-sum of two $K_5$-minor-free graphs $G_1, G_2$ with fewer vertices, where $0 \le i \le 3$. By the minimality assumption, we therefore know that $G_1, G_2$ satisfy the claim. Clearly, every super-graph of $G$ does not satisfy the assertion as well. Therefore, by the assumed edge-maximality, $G$ must in fact be the proper $i$-sum of $G_1$ and $G_2$. 

Now choose some orientation $\vec{G}$ of $G$ for which the above claim fails. Denote by $\vec{G}_1, \vec{G}_2$ the induced orientations on the subgraphs $G_1, G_2$.

Let $e=uv \in E(G)=E(G_1) \cup E(G_2)$ be an arbitrary edge with a given pre-colouring $p:\{u,v\} \rightarrow \{1,2\}$. W.l.o.g. assume that $e \in E(G_1)$. Let $c_1:V(G_1) \rightarrow \{1,2\}$ be an acyclic $2$-colouring of $\vec{G}_1$ without monochromatic triangles which extends $p$. The clique $C=V(G_1) \cap V(G_2)$ in $G_2$ is either empty, a single vertex, and edge or a triangle. In each case, the restriction $p'=c_1|_C$ (if non-empty) can be considered as a pre-colouring of a vertex, an edge or a triangle in $G_2$ with two colours. In the case where $C$ is a triangle, by the choice of $c_1$, we furthermore know that the pre-colouring $p$ is not monochromatic. We therefore conclude that in any case, there is an acyclic $2$-colouring $c_2:V(G_2) \rightarrow \{1,2\}$ of $\vec{G}_2$ without monochromatic triangles which extends $p'$. Therefore $c_1$ and $c_2$ agree on $V(G_1) \cap V(G_2)=C$ and it follows from Lemma \ref{cliques} that the common extension of $c_1, c_2$ to $V(G)$ defines an acyclic $2$-colouring of $\vec{G}$ which extends $p$. Because every triangle of $G$ is fully contained in $G_1$ or $G_2$, we also have that there are no monochromatic triangles under $c$. 

Similarly, for any triangle $t=a_1a_2a_3$ in $G$ equipped with a non-monochromatic pre-colouring $p:\{a_1,a_2,a_3\} \rightarrow \{1,2\}$, we may assume w.l.o.g. that $t$ is fully contained in $G_1$. Again, we find a pair of acyclic $2$-colourings $c_1,c_2$ of $\vec{G}_1, \vec{G}_2$ such that $c_1$ extends $p$, $c_2$ coincides with $c_1$ on the clique $V(G_1) \cap V(G_2)$, and there are no monochromatic triangles in $G_j$ under $c_j$ for $j=1,2$. Finally, the common extension of $c_1,c_2$ to $V(G)$ by Lemma \ref{cliques} defines an acyclic $2$-colouring of $\vec{G}$ with the desired properties.

From this we conclude that $\vec{G}$ admits an extending acyclic $2$-colouring without monochromatic triangles for any pre-colouring of an edge and for any non-monochromatic pre-colouring of a triangle. This is a contradiction to our choice of $\vec{G}$. This shows that the initial assumption was false and concludes the proof of the Theorem.
\end{proof} 
\section{Conclusion}
A natural question that comes out from the discussion in this paper is the following.
\begin{question}
What is the largest minor-closed class $\mathcal{G}_2$ of undirected graphs with dichromatic number at most $2$?
\end{question}
While $\vec{\chi}(K_6)=2$, it is known that $\vec{\chi}(K_7)=3$. Therefore, $\mathcal{G}_2$ is a subclass of the $K_7$-minor-free graphs. However, $\mathcal{G}_2$ seems to be a lot smaller than this class. In fact, there are examples of $K_{3,3}$-minor-free graphs with dichromatic number greater than $2$, see Figure \ref{k33} for a simple example of such a graph.
\begin{figure}[h]
\centering
\includegraphics[scale=1]{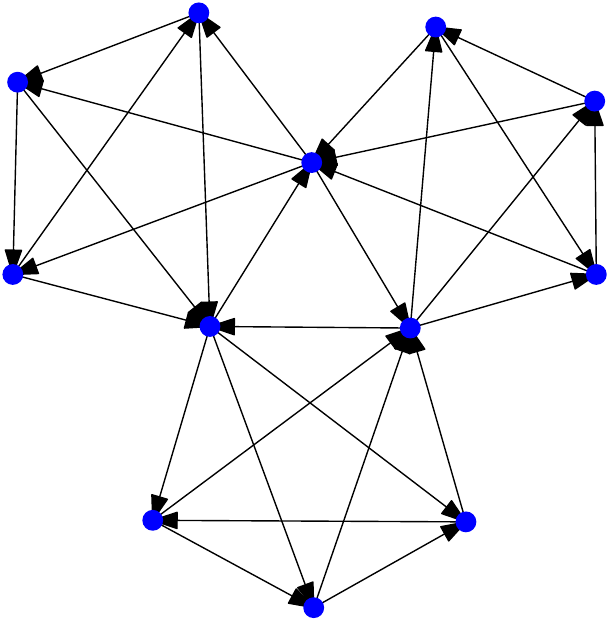}
\caption{An oriented $K_{3,3}$-minor-free graph without an acyclic $2$-colouring.} \label{k33}
\end{figure}
\bibliography{references}
\bibliographystyle{alpha}

\end{document}